\newcommand{\Z}{\mathbb Z}
\newcommand{\R}{\mathbb R}
\def\R{\mathbb R}
\def\Z{\mathbb Z}
\def\ep{\epsilon}
\def\rg{\rangle} 
\def\lg{\langle}
\newcommand{\be}{\begin{equation}}
\newcommand{\ee}{\end{equation}}
\def\1{{\bf 1}}
\def\m{\noalign{\medskip}}
\def\ds{\displaystyle}
\newcommand{\T}{\mathbb{T}}
\newtheorem{Theorem}{Theorem}[section]
\newtheorem{Definition}[Theorem]{Definition}
\newtheorem{Lemma}[Theorem]{Lemma}
\newtheorem{Remark}[Theorem]{Remark}
\newtheorem{Example}[Theorem]{Example}
\begin{document}

\title{Long time average of  first order mean field games and weak KAM theory}
\author{P. Cardaliaguet\thanks{Ceremade, Universit\'e Paris-Dauphine,
Place du Maréchal de Lattre de Tassigny, 75775 Paris cedex 16 (France)} }

\maketitle

\begin{abstract} We show that the long time average of solutions of first order mean field game systems in finite horizon is governed by an ergodic system of mean field game type. The well-posedness of this later system and the uniqueness of the ergodic constant rely on weak KAM theory. 
\end{abstract}

\section*{Introduction}

The aim of this paper is to study the link between the finite horizon first order mean field game system
\be\label{MFG}
\left\{\begin{array}{cl}
(i)& -\partial_t u^T  +H(x,Du^T) =F(x,m^T(t))\; {\rm in }\; (0,T)\times \R^d\\
\m
(ii) & \partial_t m^T  -{\rm div} (m^TD_pH(x,Du^T))=0\; {\rm in }\; (0,T)\times \R^d\\
\m
(iii)& m^T(0)=m_0, \; u^T(x,T)=u^f(x)\; {\rm in}\; \R^d
\end{array}\right.
\ee
and the  ergodic first order mean field game system
\be\label{eq:ergpb}
\left\{\begin{array}{rl}
(i)& \bar \lambda +H(x, D\bar u) =F(x,\bar m)\; {\rm in}\; \R^d\\
\m
(ii) & -{\rm div} (\bar mD_pH(x,D\bar u))=0\; {\rm in}\; \R^d\\
\m
(ii) & \!\!\!  \ds \int_{Q} \bar u\,dx=0\,,\quad \int_{Q} \bar m\, dx=1
\end{array}\right.
\ee
Let us recall that mean field games have been introduced simultaneously by Lasry and Lions  \cite{LL06cr1, LL06cr2, LL07mf} and by Huang, Caines and Malham\'e \cite{HCMieeeAC06} to analyze large population stochastic differential games. In \eqref{MFG}, the scalar unknowns  $u^T,m^T$ are defined on $[0,T]\times \R^d$ and $F$ is a coupling between the two equations. The function $u^T$ can be understood as the value function---for a typical and small player---of  a finite horizon optimal control problem  in which the density $m^T$ of the other players enters as a data. The optimal feedback of this small player is then given by $-D_pH(x,Du(x))$. When all players play according to this rule, their density $m^T=m^T(t,x)$ evolves in time by equation \eqref{MFG}-(ii).  The ergodic problem \eqref{eq:ergpb} has similar interpretation, expect that now the optimal control problem is of ergodic type: the unknowns are the ergodic constant  $\bar \lambda$, the value function $\bar u$ of the ergodic problem and $\bar m$ which is an associate invariant measure. 

In analogy with the case of optimal control problems, it is expected that, as $T\to+\infty$, the solution of the finite horizon system \eqref{MFG} somehow converges to the solution of the ergodic system \eqref{eq:ergpb}. For second order mean field game systems (i.e., systems corresponding to stochastic control problems with a nondegenerate diffusion) this kind of behavior has been first discussed in \cite{LLperso} and then developed and sharpened in \cite{CLLP,CLLP2}. Results in the discrete setting were also obtained in \cite{GMS}. Typically it is proved in the above mentioned papers that $u^T(0,\cdot)/T$ converges to $\bar \lambda$ while  (a rescaled version of) $m^T$ converges to the invariant measure $\bar m$. The precise meaning of the convergence depends on the coupling $F$, which can be of local or nonlocal nature: when $F$ is local (i.e., $F(x,m(t))=\bar F(x,m(t,x))$ depends on the value $m(t,x)$ of the density of $m(t)$), the convergence holds in Lebesgue spaces. When the coupling $F$ is of nonlocal nature and smoothing, the convergence is uniform. The main result of \cite{CLLP,CLLP2} is an exponential convergence rate when the coupling---and the diffusion---are ``strong enough". 

Here we consider the same issue for first order mean field games in which the coupling $F$ is nonlocal and smoothing. We also assume that all functions are periodic in space and set $\T^d=\R^d/\Z^d$. In the uncoupled case $F=0$, this problem has been the object of a lot of attention in the recent years under the name of weak-KAM theory (see in particular the pioneering works \cite{Fathi1, Fathi2, NR99, Ro98} and the monograph \cite{Fathi3}). Compared to the second order setting, several interesting issues arise: even for $F=0$, one cannot expect the ergodic system to have a {\it unique} solution. As a consequence, in the presence of the coupling, {\it it is not even clear that the ergodic constant $\bar \lambda$ is unique}. There is also a strong difficulty to give a meaning to \eqref{eq:ergpb}-(ii): indeed, as a solution of a classical Hamilton-Jacobi equation, the map $\bar u$ is at most Lipschitz continuous (actually semiconcave); on another hand, the ergodic measure cannot be expected to have a density (again, this is what happens in general for $F=0$): as a consequence {\it the term $\bar mD_pH(x,D\bar u)$ is a priori ill-defined}. To overcome these difficulties, we use two tools: the first one is a typical regularity property arising in weak-KAM theory \cite{Fathi1, Fathi2, Fathi3}: the measure $\bar m$ happens to concentrate on the so-called  projected Mather-set, in which the derivative of $\bar u$ exists and is uniquely defined (independently of the solution $\bar u$). This allows to give a meaning to the term $\bar mD_pH(x,D\bar u)$ and provides a key tool for showing the existence of solutions to \eqref{eq:ergpb}. As for the uniqueness issue, we introduce a weak coercivity condition on the coupling: namely we assume that there is a constant $\bar c>0$ such that, for any pair of measures $m_1, m_2$, 
$$
\int_{\T^d} (F(x,m_1)-F(x,m_2))d(m_1-m_2) \geq \overline c \int_{\T^d} (F(x,m_1)-F(x,m_2))^2 dx
$$
This condition---which is quite natural in the context of mean field game theory (see Example \ref{Ex:1} below)---entails the uniqueness of the ergodic constant. 
Then, using energy estimates, we prove our main result concerning the convergence of  $u^T(0,\cdot)/T$ to $\bar \lambda$: there is a constant $C$ such that  
$$
\sup_{t\in [0,T]} \left\| \frac{u^T(t,\cdot)}{T}-\bar \lambda \left(1-\frac{t}{T}\right) \right\|_\infty \leq  \frac{C}{T^{\frac12}}\;.
$$
As for the convergence of $m^T$, we have little information due to the lack of uniqueness of the ergodic measure $\bar m$. However,  the coupling $F(\cdot,\bar m)$  turns out  to be unique and therefore it seems the good quantity to look at: indeed we have the following estimate: 
$$
\int_0^T \|F(\cdot,m^T(t))-F(\cdot,\bar m)\|_\infty\ dt \leq CT^{\frac12}\;.
$$
Note that this inequality means that $F(\cdot,m^T(t))$ must be close to $F(\cdot,\bar m)$ on a large amount of time. \\

The paper is organized as follows: in a first part we introduce the notations and state the assumptions used throughout the paper. Then we study the ergodic mean field game system. In the last section we prove our convergence result. In appendix we recall the main steps of the proof for the well-posedness of \eqref{MFG}. \\

{\bf Acknowledgement: } We wish to thank Yves Achdou for fruitful discussions. 

This work has been partially supported by the Commission of the
European Communities under the 7-th Framework Programme Marie
Curie Initial Training Networks   Project SADCO,
FP7-PEOPLE-2010-ITN, No 264735, and by the French National Research Agency
 ANR-10-BLAN 0112 and ANR-12-BS01-0008-01.

%%%%%%%%%%%%%%%%%%%%%%%%%%%%%%%%%%%%%%%%%%
\section{Preliminaries}

Throughout this note, we work in the periodic setting: we denote by $\T^d$ the torus $\R^d/\Z^d$, by $P(\T^d)$ the set of Borel probability measures on $\T^d$,  and by $P(\T^d\times\R^d)$ the set of Borel probability measures on $\T^d\times \R^d$. Both sets $\T^d$ and by $P(\T^d\times \R^d)$ are endowed with the weak-* convergence. Let us recall that $P(\T^d)$ is compact for this topology.  It will be convenient to put a metric on $P(\T^d)$ (which metricizes the weak-* topology): we shall work with the Monge-Wasserstein distance defined, for any $\mu,\nu \in P(\T^d)$, by
\be\label{MWdistance}
{\bf d}_1(\mu,\nu)= \sup_h \left\{ \int_{\T^d} h d(\mu-\nu) \right\}
\ee
where the supremum is taken over all the maps $h:\T^d\to \R$ which are 1-Lipschitz continuous. 

The maps $H$ and $F$ are periodic in the space arguments: $H:\T^d\times \R^d\to \R$ while $F:\T^d\times P(\T^d)\to\R$. In the same way, the initial and terminal conditions $m_0$ and $u^f$---which are fixed throughout the paper, are periodic in space: $u^f:\T^d\to \R$ is supposed to be of class ${\mathcal C}^2$ while $m^f$ belongs to $P(\T^d)$ is assumed to have a bounded density ($m^f\in L^\infty$). 
 
We now state our key assumptions on the data: these conditions are valid throughout the paper. The map $F$ is supposed to be regularizing:
\be\label{regucondF}
\mbox{\rm The map $m\to F(\cdot,m)$ is Lipschitz continuous from $P(\T^d)$ to ${\mathcal C}^2(\T^d)$.}
\ee
In particular, as $P(\T^d)$ is compact, there is $\bar C>0$ such that
\be\label{boundC2}
\sup_{m\in P(\T^d)} \left\| F(\cdot,m)\right\|_{{\mathcal C}^2} \leq \bar C
\ee
As explained in  the introduction, our key assumption is the following weak coercivity for the coupling:   there is a constant $\overline c>0$ such that, for any $m_1,m_2\in P(\T^d)$,  
\be\label{hyp:Fstrict}
\int_{\T^d} (F(x,m_1)-F(x,m_2))d(m_1-m_2) \geq \overline c \int_{\T^d} (F(x,m_1)-F(x,m_2))^2 dx.
\ee
An example of map $F$ satisfying \eqref{regucondF} and \eqref{hyp:Fstrict} is given below. Concerning the Hamiltonian, 
we suppose that $H$  is of class ${\mathcal C}^2$ on $\T^d\times \R^d$ and quadratic-like in the second variable: 
\be\label{hyp:unifCv}
H\in {\mathcal C}^2(\T^d\times \R^d)\; {\rm and }\; \frac{1}{\bar C} I_d\leq D^2_{pp} H(x,p) \leq \bar C I_d \qquad \forall (x,p)\in \T^d\times \R^d\;.
\ee
Let us recall that, under assumptions \eqref{regucondF} on $F$ and \eqref{hyp:unifCv} on $H$, for any time horizon $T$, there  is a unique solution $(u^T,m^T)$ to the mean field game system \eqref{MFG}: by a solution, we mean that $u^T$ is a Lipschitz continuous  viscosity solution to \eqref{MFG}-(i) while $m^T\in L^\infty((0,T)\times \T^d)$ is a solution of \eqref{MFG}-(ii) in the sense of distribution (see \cite{LL07mf} and Theorem \ref{thm:MFG} in appendix). Recall also that the map $t\to m^T(t)$ is weakly continuous as a measure on $\T^d$. In particular, for any continuous map $\phi:\T^d\to\R$, the integral $\ds \int_{\T^d}\phi(x)m^T(t,x)$ is continuous in $t\in [0,T]$. 

\begin{Example}\label{Ex:1} Assume that $F:\T^d\times P(\T^d)\to \R$ is of the form
$$
F(x,m)= \left(\bar F(\cdot,  m \star\xi (\cdot)) \star \xi\right)(x)
$$
where $\xi:\R^d \to \R$ is a smooth, even kernel with compact support and where $\bar F: \T^d\times \R\to \R$ is a smooth map for which there is a constant $c>0$ with
\be\label{hypex}
c \leq \frac{\partial \bar F(x,z)}{\partial z} \leq \frac{1}{c} \qquad \forall (x,z)\in \T^d\times \R\;.
\ee
Then $F$ satisfies conditions  \eqref{regucondF} and \eqref{hyp:Fstrict}. 
\end{Example}

\begin{proof} The Lipschitz continuity assumption~\eqref{regucondF} is straightforward from the smoothness assumption on $\bar F$. 
Let us now check~\eqref{hyp:Fstrict}. We have, on the one hand,
$$
\begin{array}{l}
\ds \int_{\T^d} (F(x,m_1)-F(x,m_2))d(m_1-m_2)  \\ 
\qquad \ds =\;  \int_{\T^d} [\bar F(\cdot,m_1\star \xi)-\bar F(\cdot,m_2\star \xi )]\star\xi\  d(m_1-m_2)\\
\m 
\qquad = \; \ds \int_{\T^d} [\bar F(\cdot,m_1\star \xi)-\bar F(\cdot,m_2\star \xi )](m_1\star\xi-m_2\star\xi)\\ \m
\end{array}
$$
which, in view of our growth condition on $\bar F$, implies that
\be\label{et1Exemple}
\begin{array}{rl}
\ds \int_{\T^d} (F(x,m_1)-F(x,m_2))d(m_1-m_2) \geq   
& \ds  c \int_{\T^d} (m_1\star \xi-m_2\star \xi )^2
%\\\m
%\geq & \ds  c^3 \int_{\T^d} (F(x,m_1\star \xi)-F(x,m_2\star \xi))^2.
\end{array}
\ee
On another hand, 
$$
\begin{array}{l}
\ds \int_{\T^d} (F(x,m_1)-F(x,m_2))^2 dx\\
\qquad = \; \ds \int_{\T^d} \left[(\bar F(\cdot, m_1\star \xi)-\bar F(\cdot,m_2\star \xi))\star \xi\right]^2 dx \\ \m
\qquad  \leq \; \ds \ds \left\|(\bar F(\cdot, m_1\star \xi)-\bar F(\cdot,m_2\star \xi))\star \xi\right\|_\infty^2\\ \m
\qquad \leq \; \ds \|\xi\|^2_{L^2(\R^d)}\left\|(\bar F(\cdot, m_1\star \xi)-\bar F(\cdot,m_2\star \xi))\right\|^2_{L^2(\T^d)}\\ \m
\qquad \leq \; \ds \|\xi\|^2_{L^2(\R^d)} c^{-2} \int_{\T^d} (m_1\star \xi-m_2\star \xi)^2 dx
%\qquad \leq \; \ds  \|\xi\|^2_{L^2(\R^d)}   c^{-3} \int_{\T^d} (F(x,m_1)-F(x,m_2))d(m_1-m_2)\;.
\end{array}
$$
where we have used the second inequality in the right-hand side of assumption \eqref{hypex} in the last line. Using \eqref{et1Exemple} we deduce that  $F$ satisfies \eqref{hyp:Fstrict} with $\bar c= c^3 \|\xi\|^{-2}_{L^2(\R^d)}$. 
\end{proof}

%%%%%%%%%%%%%%%%%%%%%%%%%%%%%%%%%%%%%%%%%%
\section{The ergodic problem}

In this section, we show that the ergodic system \eqref{eq:ergpb} is well-posed. 

\begin{Definition} We say that the triple $(\bar \lambda, \bar u, \bar m)$ is a solution of \eqref{eq:ergpb} if $\bar u$ is Lipschitz continuous viscosity solution of \eqref{eq:ergpb}-(i), if $D\bar u(x)$ exists for $\bar m$-a.e. $x\in \T^d$ and if \eqref{eq:ergpb}-(ii) is satisfied in the sense of distribution. 
\end{Definition}

\begin{Remark}\label{rem:1} {\rm
Because of the regularity of $H$ and $F$, the map $\bar u$ is semiconcave (see, for instance, Theorem 3.3 of \cite{Li}). In particular, $D\bar u$ is continuous at  $\bar m$-a.e. $x\in \T^d$.  Note also that $\bar mD\bar u$ is a vector measure. 
}\end{Remark}

Here is our main result concerning system  \eqref{eq:ergpb}.

\begin{Theorem}\label{th:ergopb} Under assumption  \eqref{regucondF} and \eqref{hyp:unifCv}, there is at least one solution of the ergodic problem \eqref{eq:ergpb}. If, moreover, assumption \eqref{hyp:Fstrict} holds, the ergodic constant is unique: more precisely, if $(\bar \lambda_1,\bar u_1,\bar m_1)$ and $(\bar \lambda_2,\bar u_2,\bar m_2)$ are two solutions of \eqref{eq:ergpb}, then $\bar \lambda_1=\bar \lambda_2$ and $F(\cdot,\bar m_1)=F(\cdot,\bar m_2)$. 
\end{Theorem}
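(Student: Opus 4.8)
The plan is to treat existence and uniqueness separately, assumption \eqref{hyp:Fstrict} entering only for the latter.

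\emph{Existence.} I would set up a fixed point argument on the compact convex set $P(\T^d)$. Given $m\in P(\T^d)$, the datum $g:=F(\cdot,m)$ is a fixed $\mathcal C^2$ function by \eqref{regucondF}, and I would solve the scalar cell problem $\bar\lambda + H(x,Du)=g(x)$ on $\T^d$: by the standard theory of ergodic Hamilton--Jacobi equations (weak KAM theory, \cite{Fathi1,Fathi3}) there is a unique constant $\bar\lambda=\bar\lambda[g]$ admitting a Lipschitz viscosity solution $\bar u$, and $\bar u$ is semiconcave thanks to \eqref{hyp:unifCv} (cf. Remark \ref{rem:1}). To produce the measure I would take a Mather measure for this cell problem: its projection $\bar m$ to $\T^d$ is invariant under the flow of $-D_pH(x,D\bar u)$ and is concentrated on the projected Mather set, on which $D\bar u$ exists and is single valued; this is exactly what makes $\bar m\,D_pH(x,D\bar u)$ meaningful and yields \eqref{eq:ergpb}-(ii) in the sense of distributions. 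The assignment $m\mapsto \bar m$ defines a correspondence on $P(\T^d)$ whose fixed points are the solutions of \eqref{eq:ergpb}; since $P(\T^d)$ is compact and convex and the correspondence has closed graph with convex values (using stability of viscosity solutions under uniform convergence of the data and weak-$*$ compactness of Mather measures), Kakutani's theorem provides a fixed point. An equivalent route is vanishing viscosity: solve the nondegenerate ergodic systems (well posed by second order theory), obtain uniform Lipschitz bounds on $\bar u^\varepsilon$ and tightness of $\bar m^\varepsilon$, and pass to the limit, the only delicate point being the identification of $\lim \bar m^\varepsilon D_pH(x,D\bar u^\varepsilon)$, again handled by concentration of the limiting measure on the projected Mather set.

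\emph{Uniqueness of the constant.} For two solutions $(\bar\lambda_i,\bar u_i,\bar m_i)$, $i=1,2$, write $g_i:=F(\cdot,\bar m_i)$ and set $E:=\int_{\T^d}(g_1-g_2)\,d(\bar m_1-\bar m_2)$. Subtracting the two equations \eqref{eq:ergpb}-(i) gives $g_1-g_2=(\bar\lambda_1-\bar\lambda_2)+H(x,D\bar u_1)-H(x,D\bar u_2)$; integrating against $d(\bar m_1-\bar m_2)$ the constant drops out because both $\bar m_i$ are probability measures, so $E=\int_{\T^d}(H(x,D\bar u_1)-H(x,D\bar u_2))\,d(\bar m_1-\bar m_2)$. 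I would then invoke the uniform convexity in \eqref{hyp:unifCv}, namely $H(x,q)\ge H(x,p)+D_pH(x,p)\cdot(q-p)+\tfrac{1}{2\bar C}|q-p|^2$, once with $(p,q)=(D\bar u_1,D\bar u_2)$ integrated against $\bar m_1$ and once with $(p,q)=(D\bar u_2,D\bar u_1)$ integrated against $\bar m_2$, together with the two continuity equations \eqref{eq:ergpb}-(ii) tested against $\bar u_1-\bar u_2$ to kill the linear terms. This yields
\[
E\le -\frac{1}{2\bar C}\int_{\T^d}|D\bar u_1-D\bar u_2|^2\,d\bar m_1-\frac{1}{2\bar C}\int_{\T^d}|D\bar u_1-D\bar u_2|^2\,d\bar m_2\le 0 .
\]
On the other hand assumption \eqref{hyp:Fstrict} gives precisely $E\ge \bar c\int_{\T^d}(g_1-g_2)^2\,dx\ge 0$. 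Hence $E=0$, forcing $\int_{\T^d}(F(x,\bar m_1)-F(x,\bar m_2))^2\,dx=0$, i.e. $F(\cdot,\bar m_1)=F(\cdot,\bar m_2)$. Since then $\bar u_1$ and $\bar u_2$ are viscosity solutions of the same scalar cell problem with right-hand side $g:=F(\cdot,\bar m_1)=F(\cdot,\bar m_2)$, uniqueness of the ergodic constant for a single Hamilton--Jacobi equation forces $\bar\lambda_1=\bar\lambda_2$.

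\emph{Main obstacle.} The computation above is formal at one precise point: $\bar u_1,\bar u_2$ are only Lipschitz and semiconcave, and the $\bar m_i$ need not be absolutely continuous, so the integrations by parts $\int_{\T^d}D_pH(x,D\bar u_i)\cdot D(\bar u_1-\bar u_2)\,d\bar m_i=0$ must be justified. When $j=i$ the integrand involves $D\bar u_i$, which by the very definition of a solution exists $\bar m_i$-a.e., and the identity follows from invariance of $\bar m_i$ under the flow $\Phi^t_i$ of $-D_pH(x,D\bar u_i)$ by differentiating $t\mapsto\int_{\T^d}\bar u_i\circ\Phi^t_i\,d\bar m_i$, which is constant. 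The genuinely delicate term is the cross term, where one integrates $D\bar u_j$ with $j\ne i$ against $\bar m_i$: a priori $\bar u_j$ is differentiable only Lebesgue-a.e., whereas $\bar m_i$ may charge its singular set. This is exactly where weak KAM regularity \cite{Fathi1,Fathi2,Fathi3} is essential: using that $\bar m_i$ is supported on the projected Mather set and invariant under the associated Lipschitz flow, and that along the smooth orbits the semiconcave function $\bar u_j$ is differentiable for a.e. time, one recovers the chain rule $\tfrac{d}{dt}\bar u_j(\Phi^t_i x)=-D\bar u_j(\Phi^t_i x)\cdot D_pH(x,D\bar u_i)$ for $\bar m_i\otimes dt$-a.e. $(x,t)$ and hence the vanishing of the cross term. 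Transferring the a.e.-differentiability of the value functions onto the singular invariant measures through the Mather structure is, I expect, the heart of the argument.
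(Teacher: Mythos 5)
Your existence argument is essentially the paper's own: Kakutani's fixed point theorem applied to the correspondence $m\mapsto C(m)$ built from the projections of the Mather measures of $L+F(\cdot,m)$, with Fathi's regularity on the projected Mather set (existence and Lipschitz continuity of $D\bar u$ there, inversion of the projection via $x\mapsto (x,D_pH(x,D\bar u(x)))$) used to give meaning to \eqref{eq:ergpb}-(ii) through flow invariance. Your uniqueness argument also has exactly the paper's skeleton: the Lasry--Lions monotonicity computation, the sign coming from convexity of $H$ after testing the continuity equations against $\bar u_1-\bar u_2$, assumption \eqref{hyp:Fstrict} to force $F(\cdot,\bar m_1)=F(\cdot,\bar m_2)$, and finally uniqueness of the ergodic constant for a single Hamilton--Jacobi equation (\cite{LPV}) to conclude $\bar\lambda_1=\bar\lambda_2$.

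The two proofs part ways exactly at the point you flag as the main obstacle, and there your sketch does not close. Your key claim --- that ``along the smooth orbits the semiconcave function $\bar u_j$ is differentiable for a.e.\ time'' --- is unsupported and cannot be taken for granted: the orbits of $\Phi^t_i$ fill the Mather set of the Hamiltonian with datum $F(\cdot,\bar m_i)$, and nothing prevents that set from lying inside the singular set of $\bar u_j$ (the singular set of a semiconcave function can contain Lipschitz curves, as for $u(x)=-|x_1|$); weak KAM theory gives differentiability of $\bar u_j$ on \emph{its own} Mather set, not on that of a different Hamiltonian. What is actually available is weaker: $t\mapsto \bar u_j(\Phi^t_i x)$ is Lipschitz, hence differentiable for a.e.\ $t$, and at such times its derivative equals $\lg p, \tfrac{d}{dt}\Phi^t_i x\rg$ simultaneously for \emph{every} $p\in D^+\bar u_j(\Phi^t_i x)$; to run your convexity step with such a selection one must further replace the equation for $\bar u_j$ by the viscosity subsolution inequality $H(x,p)\le F(x,\bar m_j)-\bar\lambda_j$, valid on all of $D^+\bar u_j$ (equality holds only for reachable gradients) --- the signs do work out, but none of this bookkeeping is in your text. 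The paper sidesteps all of it by mollification: setting $m_i^\ep=\xi^\ep\star\bar m_i$ and $V_i^\ep=\xi^\ep\star(\bar m_i D_pH(\cdot,D\bar u_i))/m_i^\ep$, one has $-{\rm div}(m_i^\ep V_i^\ep)=0$, every pairing in the monotonicity computation then involves the absolutely continuous $m_i^\ep$ (so Lebesgue-a.e.\ differentiability of the Lipschitz functions $\bar u_j$ suffices, and the problematic pairing of $D\bar u_j$ with $\bar m_i$, $j\neq i$, is never defined), and the only error term $R_\ep$ couples $D\bar u_i$ with $\bar m_i$ (same index), so it tends to $0$ by dominated convergence using the $\bar m_i$-a.e.\ continuity of $D\bar u_i$ from Remark \ref{rem:1}; assumption \eqref{hyp:Fstrict} is invoked only after letting $\ep\to0$. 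To make your proposal complete, either carry out the superdifferential chain-rule argument in full, or replace that paragraph by the mollification device.
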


%In order to prove the existence, we use an approximation argument: 
%\begin{Lemma} For $\ep>0$, let $(\lambda^\ep, u^\ep, m^\ep)$ be the unique smooth solution to the (viscous) ergodic problem 
%\be\label{eq:ergpb-visc}
%\left\{\begin{array}{rl}
%(i)& \lambda^\ep -\Delta u^\ep +H(x, Du^\ep) =F(x,m^\ep)\\
%\m
%(ii) & -\Delta m^\ep-{\rm div} (m^\ep D_pH(x,Du^\ep))=0\\
%\m
%& \!\!\! \int_{Q} u^\ep\,dx=0\,,\quad \int_{Q} m^\ep\, dx=1
%\end{array}\right.
%\ee
%Then, up to a subsequence, $(\lambda^\ep, u^\ep, m^\ep)$ converges to a solution of \eqref{eq:ergpb}. 
%\end{Lemma}
%
%\begin{proof} By classical arguments, we know that 
%$(\lambda^\ep)$ is bounded, that the $u^\ep$ are  bounded, Lipschitz continuous and semiconcave, uniformly with respect to $\ep$. So, without loss of generality, we can assume that  $\lambda^\ep\to \bar \lambda$ and $u^\ep\to \bar u$, while $m^\ep\to \bar m$ in the sense of measures. Again classical arguments show that \eqref{eq:ergpb}-(i) holds in the viscosity sense. 
%
%
%
%
%\end{proof}
%

\begin{proof} The existence of the solution relies on several aspects of weak-KAM theory, as developed by Fathi in \cite{Fathi1, Fathi2, Fathi3}. Let $L$ be the Fenchel conjugate of $H$ with respect to the last variable: $$\ds L(x,v)=\sup_{p\in \R^d} \lg p,v\rg -H(x,p)\qquad {\rm for }\; (x,v)\in \T^d\times \R^d\;.
$$
 In view of our assumptions, $L$ is of class ${\mathcal C}^2$ and uniformly convex.  Given $m\in P(\T^d)$, we consider the set $E^m$ of measures $\eta$ on $\T^d\times \R^d$ which are invariant under the Lagrangian flow $\phi^m_t=(\gamma(t),\dot \gamma(t))$ defined by 
$$
\left\{\begin{array}{l}
\ds-\frac{d}{dt} D_v \left[ L(x,\dot x)+F(x,m)\right] + D_x \left[ L(x,\dot x)+F(x,m)\right] =0\\
\ds x(0)=x, \; \dot x(0)=v
\end{array}\right.
$$
The set $M(m)$ of the minimizers of the map $$\ds \eta\to   \int_{\T^d\times \R^d}  L(x,v)+F(x,m)\ d\eta(x,v)$$ over $E^m$ 
 is nonempty, compact and convex subset of $P(\T^d\times \R^d)$. If $\pi:\T^d\times \R^d\to \T^d$ denotes the canonical projection, we finally set $C(m)=\{\pi\sharp \eta, \; \eta\in M(m)\}$. Then $C(m)$ is a convex, compact nonempty subset of $P(\T^d)$. From the continuity assumption  \eqref{regucondF} on $F$ and the coercivity of $L$,  the set-valued map $m\to C(m)$ has a compact graph. Using Kakutani fixed point theorem, one deduce then that
$C$ has a fixed point: there is $\bar m\in P(\T^d)$ such that $\bar m\in C(\bar m)$. Let $\bar \eta\in M(\bar m)$ be such that 
$\pi\sharp \bar \eta=\bar m$. 
 
 Let $(\bar \lambda, \bar u)$ be such that $\bar u$ is a continuous, periodic viscosity solution of 
 $$
 \bar \lambda +H(x, D\bar u) =F(x,\bar m)\qquad {\rm in }\; \T^d\;.
 $$
Since the Hamiltonian is coercive and satisfies our smoothness conditions, $\bar u$ is Lipschitz continuous and semiconcave.  Following Fathi (\cite{Fathi1}, Corollary 2), 
 $$
 \begin{array}{rl}
\ds \bar \lambda \; = & \ds \min_{\eta\in E^{\bar m}}   \int_{\T^d\times \R^d}  L(x,v)+F(x,\bar m)\ d \eta(x,v) \\
=& \ds  \int_{\T^d\times \R^d}  L(x,v)+F(x,\bar m)\ d\bar \eta(x,v)
\end{array}
$$
Moreover $D\bar u$ exists everywhere on  the support of $\bar m$ and is Lipschitz continuous on this support (\cite{Fathi1}, Proposition 3). It is also known that the  canonical map $\pi:{\rm Spt}(\bar \eta)\to {\rm Spt}(\bar m)$ is one-to-one and its inverse is given by $x\to (x,D_pH(x,D\bar u(x)))$ on $ {\rm Spt}(\bar m)$. In particular, the first component $\gamma^x_t$ of the flow $\phi_t$ satisfies
$$
\frac{d}{dt} \gamma_t^x= D_pH( \gamma_t^x,D\bar u( \gamma_t^x))\; {\rm for }\; x\in {\rm Spt}(\bar m)\;.
$$
This implies that equality $-{\rm div}(\bar m D\bar u)=0$ holds in $\T^d$: indeed, as $\bar \eta$ is invariant under the flow $\phi^{\bar m}_t$, $\bar m$ is invariant under the flow $\gamma^x_t$ and we have, for any test function $f\in {\mathcal C}^\infty(\T^d)$,  
$$
\begin{array}{rl}
\ds 0\; =& \ds \frac{d}{dt} \int_{\T^d} f(\gamma^x_t) d\bar m(x)=  \int_{\T^d} \lg Df(\gamma^x_t), D_pH( \gamma_t^x,D\bar u( \gamma_t^x))\rg d\bar m(x)\\
= & \ds  \int_{\T^d} \lg Df(y), D_pH(y,Du(y))\rg d\bar m(y)\;.
\end{array}
$$

 \bigskip

We now show that uniqueness holds. Let $(\bar \lambda_1,\bar u_1,\bar m_1)$ and $(\bar \lambda_2,\bar u_2,\bar m_2)$ be two solutions of \eqref{eq:ergpb}. Let $\ep>0$, $\xi:\R^d\to \R$ be a smooth, nonnegative, symmetric kernel with a support contained in the unit ball and of integral one. We set $\xi^\ep(x)=\frac{1}{\ep^d} \xi(x/\ep)$ and,  for $i=1,2$,  $m^\ep_i= \xi_\ep\star \bar m_i$ and $\ds V^\ep_i= \frac{\xi^\ep \star (\bar m_iD_pH(\cdot,D\bar u_i))}{m_i^\ep}$. Then 
$\ds -{\rm div} \left( m_i^\ep V^\ep_i\right)= 0$ in $\T^d$. We multiply this equality by $(\bar u_1-\bar u_2)$, integrate by parts, and subtract the resulting formulas to get: 
$$
\int_{\T^d} \lg D(\bar u_1-\bar u_2), m_1^\ep V^\ep_1-m_2^\ep V^\ep_2\rg = 0\;.
$$
Therefore
\be\label{eq:Rep}
\begin{array}{rl}
0\; = & \ds \int_{\T^d} \lg D(\bar u_1-\bar u_2), \xi^\ep\star\left(\bar m_1D_pH(\cdot,D\bar u_1)-\bar m_2D_pH(\cdot,D\bar u_2)\right) \rg \\
= & \ds  \int_{\T^d} \lg D(\bar u_1-\bar u_2), m_1^\ep D_pH(x,D\bar u_1)- m_2^\ep D_pH(x,D\bar u_2) \rg+R_\ep
\end{array}
\ee
where (the double integral being on $\R^d\times \T^d$)
$$
\begin{array}{l}
\ds R_\ep =\\
\ds  \iint \xi^\ep(x-y) \lg D(\bar u_1-\bar u_2)(x)), (D_pH(y,D\bar u_1(y))-D_pH(x,D\bar u_1(x)))\rg \bar m_1(dy) dx \\
\ds -\iint \xi^\ep(x-y) 
% \ds \qquad \qquad 
 \lg D(\bar u_1-\bar u_2)(x)), (D_pH(y,D\bar u_2(y))-D_pH(x,D\bar u_2(x))\rg \bar m_2(dy)dx\\
=  \ds \iint \xi(z) \lg D(\bar u_1-\bar u_2)(y+\ep z)), (D_pH(y,D\bar u_1(y))-D_pH(y+\ep z,D\bar u_1(y+\ep z)))\rg \bar m_1(dy) dz \\
 \ds -\iint\xi(z) \lg D(\bar u_1-\bar u_2)(y+\ep z)), (D_pH(y,D\bar u_2(y))-D_pH(y+\ep z,D\bar u_2(y+\ep z)))\rg \bar m_2(dy)dz\\
\end{array}
$$
Since $\bar u_1$ and $\bar u_2$ are Lipschitz continuous, we get 
$$
\begin{array}{l}
\ds \left| R_\ep\right| \leq  \\ 
\ds  \ \ds C \iint \xi(z)\left| D_pH(y,D\bar u_1(y))-D_pH(y+\ep z,D\bar u_1(y+\ep z)))\right|  \bar m_1(dy) dz \\
\; \ds + C \iint \xi(z) \left| D_pH(y,D\bar u_2(y))-D_pH(y+\ep z,D\bar u_2(y+\ep z)))\right|  \bar m_2(dy)dz
\end{array}
$$
Following Remark \ref{rem:1}, the maps $D\bar u_i$ are continuous at  $\bar m_i-$a.e.  $y\in \R^d$ for $i=1,2$. Then Lebesgue Theorem implies that $R_\ep\to 0$ as $\ep \to 0$. 

Next we multiply the equality satisfied by the $\bar u_i$ by $(m_1^\ep-m_2^\ep)$, integrate in space (recalling that the $m^\ep_i$ are probability measures) and subtract to get
$$
\int_{\T^d} (m_1^\ep-m_2^\ep)\left[H(x,D\bar u_1)-H(x,D\bar u_2) -F(x,\bar m_1)+F(x,\bar m_2)\right] dx =0\;. 
$$
We combine the above equality with \eqref{eq:Rep} and obtain
$$
\begin{array}{l}
\ds \int_{\T^d} \lg D(\bar u_1-\bar u_2), m_1^\ep D_pH(x,D\bar u_1)- m_2^\ep D_pH(x,D\bar u_2) \rg \\
\qquad \ds - (m_1^\ep-m_2^\ep)\left[H(x,D\bar u_1)-H(x,D\bar u_2) -F(x,\bar m_1)+F(x,\bar m_2)\right] dx = -R_\ep\;. 
\end{array}
$$
Let us set (for $i=1,2$) $H_i= H(x,D\bar u_i)$ and $DH_i= D_pH(x,D\bar u_i)$. Then, following Lasry-Lions classical computation \cite{LL06cr1, LL06cr2,LL07mf} the above formula can be rearranged as 
$$
\begin{array}{r}
\ds \int_{\T^d} m_1^\ep  \left[ H_2-H_1-\lg DH_1, D(\bar u_2-\bar u_1)\rg \right]
+ m_2^\ep  \left[ H_1-H_2-\lg DH_2, D(\bar u_1-\bar u_2)\rg\right]  \\
\ds + \int_{\T^d} (m_1^\ep-m_2^\ep)(F(x,m_1)-F(x,m_2))  = -R_\ep\;. 
\end{array}
$$
Using the convexity of $H$, the terms $H_2-H_1-\lg DH_1, D(\bar u_2-\bar u_1)\rg$ and $H_1-H_2-\lg DH_2, D(\bar u_1-\bar u_2)\rg$ are nonnegative. So
$$
\int_{\T^d} (m_1^\ep-m_2^\ep)(F(x,\bar m_1)-F(x,\bar m_2))  \leq -R_\ep\;.
$$
We let $\ep\to 0$ to get 
$$
\int_{\T^d} (F(x,\bar m_1)-F(x,\bar m_2))d(\bar m_1-\bar m_2)  \leq 0\;,
$$
which in turn implies that
$$
\int_{\T^d} (F(x,\bar m_1)-F(x,\bar m_2))^2dx  \leq 0\;
$$
thanks to assumption \eqref{hyp:Fstrict}. So $F(x,\bar m_1)=F(x,\bar m_2)$. Then $(\bar \lambda_1,\bar u_1)$ and $(\bar \lambda_2, \bar u_2)$  are two solutions of an ergodic problem for the same Hamilton-Jacobi equation. This is known to entail that $\bar \lambda_1=\bar \lambda_2$ (see, for instance, \cite{LPV}). 
\end{proof}

\begin{Example}{\rm Assume for instance that $H$ is quadratic: $$\ds H(x,p)=\frac12 |p|^2-V(x)\ ,$$ where $V:\R^d\to \R$ is a smooth, periodic map. 
Let $(\bar \lambda, \bar u, \bar m)$ be a solution of \eqref{eq:ergpb}. Then,  following \cite{LPV}, 
$$
\bar \lambda = -\min_{x\in \T^d} \left\{V(x)+ F(x, \bar m)\right\}
$$
and $\bar m$ is supported in the set $\ds {\rm argmin}_{x\in \T^d} \left\{V(x)+ F(x, \bar m)\right\}$. Moreover, $D\bar u=0$ in this set. 
}\end{Example}

%%%%%%%%%%%%%%%%%%%%%%%%%%%%%%%%
\section{Convergence}

Let us set $v^T(s,x)=u^T(sT,x)$  and $\nu^T(s,x)= m^T(sT,x)$ for $(s,x)\in [0,1]\times \T^d$. 
Our aim is to prove the uniform convergence of $\ds \frac{v^T(s,\cdot)}{T}$ to $\bar \lambda(1-s)$, where $\bar \lambda$ is the unique ergodic constant associated to the problem \eqref{eq:ergpb}. 

\begin{Theorem}\label{thm:uT/T} Under assumptions \eqref{regucondF}, \eqref{hyp:unifCv} and \eqref{hyp:Fstrict}, the following estimates hold:
$$
\sup_{s\in [0,1]} \left\| \frac{v^T(s,\cdot)}{T}-\bar \lambda (1-s) \right\|_\infty \leq  \frac{C}{T^{\frac12}}
$$
and 
$$
\int_0^1 \|F(\cdot,\nu^T(s))-F(\cdot,\bar m)\|_\infty\ ds \leq \frac{C}{T^{\frac12}}\;.
$$
In particular,  there is a uniform convergence of the map $\ds (s,x)\to \frac{v^T(s,x)}{T}$  to the map $s\to \bar \lambda(1-s)$ as $T\to+\infty$. 
\end{Theorem}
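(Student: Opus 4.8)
The plan is to prove the second (integral) estimate first, by an energy method of Lasry--Lions type, and then to deduce the first (uniform) estimate from it through the comparison principle for the Hamilton--Jacobi equation. Throughout I use the uniform a priori bounds available for \eqref{MFG}: since $H$ is coercive and $F(\cdot,m)$ is bounded in ${\mathcal C}^1$ by \eqref{boundC2}, the value function $u^T$ is Lipschitz in space with a constant independent of $T$, and $\bar u$ is Lipschitz as well by Theorem \ref{th:ergopb}. Consequently the ``energy''
$$\Phi(t):=\int_{\T^d}(u^T(t,\cdot)-\bar u)\,d(m^T(t)-\bar m)$$
is bounded uniformly in $t$ and $T$: indeed $m^T(t)$ and $\bar m$ both have mass one, so $u^T(t,\cdot)-\bar u$ may be normalized by an additive constant, after which $|\Phi(t)|$ is controlled by the (uniformly bounded) oscillation of $u^T(t,\cdot)-\bar u$ times the total variation of $m^T(t)-\bar m$.

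Next I would run the Lasry--Lions computation on $\Phi$. Differentiating in time and substituting $\partial_t u^T=H(x,Du^T)-F(x,m^T)$ from \eqref{MFG}-(i) and $\partial_t m^T={\rm div}(m^T D_pH(x,Du^T))$ from \eqref{MFG}-(ii), then integrating by parts in space and inserting the ergodic relations $\bar\lambda+H(x,D\bar u)=F(x,\bar m)$ and $-{\rm div}(\bar m D_pH(x,D\bar u))=0$ (the constant $\bar\lambda$ drops since $\int d(m^T-\bar m)=0$), one is led, exactly as in the uniqueness part of Theorem \ref{th:ergopb}, to
$$\frac{d}{dt}\Phi(t)=-\int_{\T^d}(F(\cdot,m^T)-F(\cdot,\bar m))\,d(m^T-\bar m)-\int_{\T^d} m^T B_1-\int_{\T^d}\bar m\, B_2,$$
where $B_1=H(x,D\bar u)-H(x,Du^T)-\lg D_pH(x,Du^T),D\bar u-Du^T\rg\ge 0$ and $B_2=H(x,Du^T)-H(x,D\bar u)-\lg D_pH(x,D\bar u),Du^T-D\bar u\rg\ge 0$ are the (nonnegative) convexity defects of $H$. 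Integrating over $[0,T]$ and discarding the two nonnegative defect terms gives
$$\int_0^T\!\int_{\T^d}(F(\cdot,m^T)-F(\cdot,\bar m))\,d(m^T-\bar m)\,dt\le\Phi(0)-\Phi(T)\le C,$$
with $C$ independent of $T$.

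From here the weak coercivity \eqref{hyp:Fstrict} converts the monotone quantity into an $L^2$ bound, $\int_0^T\|F(\cdot,m^T(t))-F(\cdot,\bar m)\|_{L^2(\T^d)}^2\,dt\le C/\bar c$, whence $\int_0^T\|F(\cdot,m^T(t))-F(\cdot,\bar m)\|_{L^2(\T^d)}\,dt\le CT^{1/2}$ by Cauchy--Schwarz in time. The uniform ${\mathcal C}^2$ bound \eqref{boundC2} then lets me upgrade the spatial norm from $L^2$ to $L^\infty$ by interpolation, which after the rescaling $t=sT$ yields the second estimate. For the first estimate, set $\tilde u(t,x)=\bar u(x)+\bar\lambda(T-t)$; by the ergodic equation it solves $-\partial_t\tilde u+H(x,D\tilde u)=F(x,\bar m)$ with $\tilde u(T,\cdot)=\bar u$. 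Since $u^T$ and $\tilde u$ solve the same Hamilton--Jacobi equation with different right-hand sides, the comparison principle (equivalently, the sup-norm contraction property) gives $\|u^T(t,\cdot)-\tilde u(t,\cdot)\|_\infty\le\|u^f-\bar u\|_\infty+\int_t^T\|F(\cdot,m^T)-F(\cdot,\bar m)\|_\infty\,d\tau$, and the integral is $\le CT^{1/2}$ by the second estimate; dividing by $T$ and rescaling produces the claimed uniform bound on $v^T/T$.

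The main obstacle is the rigorous justification of the energy identity of the second paragraph. Because $\bar u$ is only Lipschitz (semiconcave) and $\bar m$ need not be absolutely continuous, the vector measure $\bar m D_pH(\cdot,D\bar u)$ and the integrations by parts are a priori ill-defined; this must be handled exactly as in the proof of Theorem \ref{th:ergopb}, by mollifying $\bar m D_pH(\cdot,D\bar u)$ into $m^\varepsilon V^\varepsilon$, performing the computation for the smooth objects, and letting $\varepsilon\to0$ with the remainder tending to zero thanks to the weak-KAM regularity of $D\bar u$ on ${\rm Spt}(\bar m)$ (Remark \ref{rem:1}); the term $\int m^T B_1$ likewise requires the uniform Lipschitz bound and a.e. differentiability. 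The other delicate point is the passage from the $L^2$ to the $L^\infty$ spatial norm in the third paragraph, where the regularizing hypothesis \eqref{regucondF} on $F$ is essential: the $T^{1/2}$ rate is transparent at the level of the $L^2$ norm, and it is the uniform ${\mathcal C}^2$ control on $F(\cdot,m)$ that allows the sup-norm statement to be reached.
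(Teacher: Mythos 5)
Your proposal reproduces the paper's own strategy in all essentials: the energy inequality (the paper's Lemma 3.3) obtained from the Lasry--Lions duality computation between $(u^T,m^T)$ and a solution $(\bar\lambda,\bar u,\bar m)$ of \eqref{eq:ergpb}, with the mollification of $\bar m\,D_pH(\cdot,D\bar u)$ borrowed from the uniqueness part of Theorem \ref{th:ergopb} to make the integrations by parts legitimate; then \eqref{hyp:Fstrict} plus Cauchy--Schwarz in time, and the uniform regularity of $F$ to pass from $L^2$ to $L^\infty$ in space; finally the $L^\infty$-stability (comparison) estimate against $\bar u(x)+\bar\lambda(T-t)$, which is exactly the paper's last step with $w^T$. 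Your differential formulation via $\Phi(t)$ is only cosmetically different from the paper's integrated identity, and you correctly locate the delicate point: the term $\int \bar m\, B_2$ is a priori meaningless ($\bar m$ may be singular while $Du^T$ exists only Lebesgue-a.e.), and the fix is precisely the replacement of $\bar m$ by $m^\ep$ that you describe.

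There is, however, one genuine gap, and it sits under everything else: the uniform-in-$T$ Lipschitz bound for $u^T$, which you assert follows ``since $H$ is coercive and $F(\cdot,m)$ is bounded in ${\mathcal C}^1$''. For the evolutive equation \eqref{MFG}-(i) this inference is invalid: coercivity only yields $|Du^T|^2\le C\left(1+|\partial_t u^T|+\|F\|_\infty\right)$, so one must first bound $\partial_t u^T$ uniformly in $T$, and that requires the Lipschitz continuity in time of the right-hand side $t\mapsto F(\cdot,m^T(t))$, i.e.\ of $t\mapsto m^T(t)$ in the distance ${\bf d}_1$; by Lemma \ref{lem:LipschConti} this time-Lipschitz constant $L_T$ is in turn controlled by $\|D_pH(\cdot,Du^T)\|_\infty$ --- the very quantity one is trying to bound. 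The paper breaks this circularity in Lemma \ref{lem:Esti} by a bootstrap: comparison arguments bound $\|\partial_t u^T\|_\infty$ in terms of $L_T$, coercivity converts this into a bound on $\|Du^T\|_\infty^2$ of the form $C(L_T+1)$, and Lemma \ref{lem:LipschConti} then gives $L_T\le C(L_T+1)^{\frac12}$, hence $L_T\le C$ independently of $T$. Without this lemma (or a substitute, e.g.\ via the optimal-control representation of $u^T$), your $\Phi(t)$ --- and with it the bound $\Phi(0)-\Phi(T)\le C$ that drives the energy estimate --- is not known to be bounded uniformly in $T$, so the $C/T$ bound after rescaling, and hence both final estimates, collapse. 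Everything else in your proposal matches the paper; this missing lemma is the one substantive ingredient you still need to supply.
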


In order to prove Theorem \ref{thm:uT/T}, we need some uniform estimates: 

\begin{Lemma}\label{lem:Esti} The map $u^T$ is uniformly (with respect to $T$)  Lipschitz continuous. 
\end{Lemma}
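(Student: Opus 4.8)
The plan is to exploit the representation of the viscosity solution $u^T$ of \eqref{MFG}-(i) as the value function of a calculus-of-variations problem. Writing $L$ for the Fenchel conjugate of $H$ in the second variable (as in the proof of Theorem \ref{th:ergopb}), assumption \eqref{hyp:unifCv} makes $L\in\mathcal C^2$, uniformly convex and quadratic-like, with $\frac{1}{2\bar C}|v|^2-C\le L(x,v)\le\frac{\bar C}{2}|v|^2+C$; by \eqref{boundC2} the map $s\mapsto F(\cdot,m^T(s))$ is bounded in $\mathcal C^1(\T^d)$ uniformly in $s$ and $T$; and $u^f\in\mathcal C^2$. These are the only properties of the data I will use, through
$$
u^T(t,x)=\inf_{\gamma(t)=x}\left\{u^f(\gamma(T))+\int_t^T\left[L(\gamma(s),\dot\gamma(s))+F(\gamma(s),m^T(s))\right]ds\right\}.
$$

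The crux is a bound on the velocity of minimizers that is uniform in $T$. I would obtain it on time windows of length at most $1$, in two steps. First, an $L^2$ estimate: comparing the relevant value with the constant competitor $\gamma\equiv x$ over such a window and using the coercivity of $L$ gives $\int_I|\dot\gamma|^2\le C$ on every window $I$ of length $\le 1$, so there is at least one time $s_0\in I$ with $|\dot\gamma(s_0)|\le C$. Second, a propagation step: a minimizer is a Tonelli extremal, its momentum $p=D_vL(\gamma,\dot\gamma)$ solves the Hamiltonian system, and the energy $\mathcal E=H(\gamma,p)$ obeys $\dot{\mathcal E}=\lg D_pH(\gamma,p),D_xF(\gamma,m^T)\rg$, whence $|\frac{d}{ds}\sqrt{1+\mathcal E}|\le C$ because $|D_xF|\le\bar C$. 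Integrating this over a window of length $\le 1$ propagates the bound at $s_0$ to the whole window, giving $|\dot\gamma(s)|\le C$ throughout, with $C$ independent of $T$. It is precisely the shortness of the window that makes the energy drift harmless and the constant horizon-free; over the full interval $[t,T]$ one would only get $\int|\dot\gamma|^2\le C(1+(T-t))$, and the conserved-energy argument of weak KAM is unavailable once $F\neq0$.

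With velocities under control I would prove the spatial Lipschitz bound by shifting a (near-)minimizer on a single window of length $\le 1$. For interior times $t\le T-1$ I use dynamic programming, $u^T(t,x)=\inf_y\{\mathcal A_t^{t+1}(x,y)+u^T(t+1,y)\}$, where $\mathcal A_t^{t+1}$ is the minimal action on $[t,t+1]$ with prescribed endpoints; since any point of $\T^d$ is reachable from any other in unit time at bounded speed, $\mathcal A_t^{t+1}$ is bounded \emph{absolutely} (which is what avoids a circular appeal to the oscillation of $u^T$), its minimizers have bounded velocity by the previous step, and a space-shift shows $|\mathcal A_t^{t+1}(x,y)-\mathcal A_t^{t+1}(x',y)|\le C|x-x'|$ uniformly in $y$; choosing $y$ optimal for $x'$ transfers this to $|u^T(t,x)-u^T(t,x')|\le C|x-x'|$. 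For the terminal layer $t\in[T-1,T]$ the fixed-endpoint action degenerates, so I instead translate a free-endpoint minimizer by $x'-x$: the terminal cost changes by at most $\mathrm{Lip}(u^f)|x-x'|$, the $F$-term by at most $\bar C|x-x'|(T-t)\le\bar C|x-x'|$ because the remaining horizon is $\le 1$, and the $L$-term by $\le C|x-x'|$ thanks to the uniform velocity bound (now available since transversality gives $\mathcal E(T)\le C$). Either way $\|D_xu^T(t,\cdot)\|_\infty\le C$ uniformly.

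Finally the time-Lipschitz bound follows from the spatial one: the constant competitor gives $u^T(t,x)-u^T(t',x)\le C(t'-t)$, and the reverse inequality combines the coercive lower bound on the action over $[t,t']$ with the spatial Lipschitz estimate just proved (equivalently, once $\|D_xu^T\|_\infty\le C$ the identity $\partial_t u^T=H(x,Du^T)-F(x,m^T)$ forces $|\partial_t u^T|\le C$). The main obstacle throughout is the uniformity in $T$: a global energy estimate is too weak and weak-KAM energy conservation is lost, so the whole argument is organized around localizing every estimate to windows of length $\le 1$, where the energy drift produced by $D_xF$ is $O(1)$ and all constants become horizon-independent.
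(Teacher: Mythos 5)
Your proof is correct, but it takes a genuinely different route from the paper's. The paper never leaves the PDE level and exploits the coupling itself: writing $L_T$ for the Lipschitz constant of $t\mapsto m^T(t)$ in the distance ${\bf d}_1$, it bounds $\|\partial_t u^T\|_\infty$ by viscosity comparison with time-shifts of $u^T$ (the right-hand side $F(x,m^T(t))$ being $C_0L_T$-Lipschitz in time), then bounds $\|Du^T\|_\infty$ by coercivity of $H$ in \eqref{MFG}-(i), and closes a bootstrap by feeding $\|D_pH(\cdot,Du^T)\|_\infty$ back into Lemma \ref{lem:LipschConti}, obtaining $L_T\le C(L_T+1)^{1/2}$ and hence $L_T\le C$. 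You instead work at the variational level and use nothing about $m^T$ beyond its weak continuity: the coupling is frozen into a time-dependent potential controlled only through the uniform ${\mathcal C}^1$ bound \eqref{boundC2}, and the estimate becomes a statement about Tonelli minimizers. Your key computation --- that the drift of the uncoupled energy ${\mathcal E}=H(\gamma,p)$ along an extremal is exactly $\lg D_pH,D_xF\rg$, the $D_xH$ terms cancelling, so that $|{\mathcal E}|$ stays bounded on windows of length one --- is sound, and the rest (absolute bound on the unit-window fixed-endpoint action via a bounded-speed competitor, curve-shifting for the spatial bound, transversality plus backward propagation on the terminal layer, the equation for the time bound) assembles correctly; the only imprecision is that in your velocity step the competitor for the \emph{fixed-endpoint} action must be the bounded-speed geodesic rather than the constant curve, which is in fact what your third step uses. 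What each approach buys: the paper's argument is far shorter given that the comparison principle and Lemma \ref{lem:LipschConti} are already in hand, and needs no existence/regularity theory for minimizers; yours needs no time-regularity of $m^T$ at all (hence no bootstrap, no circularity with $L_T$) and works verbatim for any continuous-in-time potential bounded in ${\mathcal C}^1$, which makes it more robust.

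There is one further point in your favor. Your insistence that every estimate be localized to windows of length at most one addresses a difficulty which the paper's own chain of inequalities glosses over: the comparison bound \eqref{Lipt} is proved with the term $C_0L_TT$ --- the time-oscillation of the right-hand side accumulates over the whole horizon --- whereas the next step \eqref{Lipx} and the closing of the bootstrap require the horizon-free bound $C(L_T+1)$. Taken literally, \eqref{Lipt} would only yield $\|Du^T\|^2_\infty\le C(L_TT+1)$ and then $L_T\le C(1+(L_TT)^{1/2})$, i.e. $L_T=O(T)$, which is not enough. Your unit-window localization (together with the observation that the energy drift depends on $D_xF$, not on the time-regularity of $F(\cdot,m^T(\cdot))$) is precisely what removes this accumulation, so your argument is not merely an alternative: it repairs the weak point of the comparison-based proof.
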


\begin{proof} Throughout the proof $C$ denotes a constant which varies with the data and may change from line to line. Since $Du^T$ is bounded (with a bound which depends a priori on $T$) and $H$ is of class ${\mathcal C}^2$, the map $t\to m^T(t)$ is Lipschitz continuous for the Monge-Wasserstein distance (see Lemma~\ref{lem:LipschConti} below). We denote by $L_T$ its Lipschitz constant and by $C_0$ be the Lipschitz continuity modulus of $F$ given by assumption~\eqref{regucondF}. We claim that 
\be\label{Lipt}
\left\|\partial_t u\right\|_\infty  \leq \|F\|_\infty+ \|H(\cdot, Du^f)\|_\infty+ C_0L_TT.
\ee
To prove \eqref{Lipt}, we first note that, as the terminal condition $u^f$ being of class ${\mathcal C}^2$, the maps 
$$
(t,x)\to u^f(x)\pm  \left( \|F\|_\infty+ \|H(\cdot, Du^f)\|_\infty\right)(T-t)
$$
are respectively super- (for $+$) and sub- (for $-$) solutions of equation~\eqref{MFG}-(i) with terminal condition $u^f$.  Since $u$ is a solution of this equation, we get by comparison  
\be\label{proofLipt1}
\|u(T-h,\cdot)-u^f\|_\infty \leq \left( \|F\|_\infty+ \|H(\cdot, Du^f)\|_\infty\right) h\qquad \forall h\in (0,T).
\ee
On another hand, since the left-hand side of equation~\eqref{MFG}-(i) is independent of time while the right-hand side is Lipschitz continuous in time, with a Lipschitz constant bounded above by $C_0L_T$,  the maps 
$$(t,x)\to u(t-h,x)\pm \left( \|u(T-h,\cdot)-u^f\|_\infty+ C_0L_Th(T-t)\right)$$ are respectively a super (for $+$) and subsolution (for $-$) of the equation satisfied by $u$. Using again comparison, we have 
$$
|u(t,x)-u(t-h,x)|\leq \|u(T-h,\cdot)-u^f\|_\infty+C_0L_Th(T-t) \qquad \forall (t,x)\in (h, T)\times \T^d. 
$$ 
Plugging \eqref{proofLipt1} into this last inequality then gives \eqref{Lipt}. 

Using the coercivity of $H$ in~\eqref{MFG}-(i), we obtain therefore 
\be\label{Lipx}
\|Du^T\|_\infty^2\leq C(L_T+ 1).
\ee
As $H$ is of class ${\mathcal C}^2$, we have then 
$$
\|D_pH(\cdot, Du^T(\cdot,\cdot))\|_\infty \leq C(L_T+1)^{\frac12}\;.
$$
Then we use Lemma \ref{lem:LipschConti}, which states that $m^T$ is  Lipschitz continuous for the Monge-Wasserstein distance with a Lipschitz constant bounded above by $\|D_pH(\cdot, Du^T(\cdot,\cdot))\|_\infty$. So 
$$
L_T\leq C(L_T+1)^{\frac12}.
$$
This shows that $L_T$ is bounded uniformly in $T$ and, in view  of~\eqref{Lipt} and~\eqref{Lipx} that $u^T$ is Lipschitz continuous with a constant independent of $T$
\end{proof}

\begin{Lemma} [Energy estimate] If $(u^T,m^T)$ is as above and  $(\bar \lambda, \bar u,\bar m)$ is a solution of the ergodic problem \eqref{eq:ergpb}, then
$$
\int_0^T \int_{\T^d} (F(x,m^T(t))-F(x,\bar m))d(m^T(t)-\bar m)dt \leq C
$$
\end{Lemma}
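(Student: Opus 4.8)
The plan is to track the time evolution of the Lasry--Lions duality quantity
\[
G(t)=\int_{\T^d} \bigl(u^T(t,x)-\bar u(x)\bigr)\,d(m^T(t)-\bar m)(x),
\]
and to show that its derivative dominates, up to sign, the energy integrand, i.e.\ that
\[
\frac{d}{dt}G(t)\le -\int_{\T^d}\bigl(F(x,m^T(t))-F(x,\bar m)\bigr)\,d(m^T(t)-\bar m).
\]
Granting this, integrating over $[0,T]$ yields $\int_0^T\!\int_{\T^d}(F(x,m^T)-F(x,\bar m))\,d(m^T-\bar m)\,dt\le G(0)-G(T)$, and the whole point is then to bound the boundary term $G(0)-G(T)$ by a constant independent of $T$.

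First I would differentiate $G$, substituting $\partial_t u^T=H(x,Du^T)-F(x,m^T)$ from \eqref{MFG}-(i), the continuity equation $\partial_t m^T={\rm div}(m^T D_pH(x,Du^T))$ from \eqref{MFG}-(ii), and integrating the latter by parts against $u^T-\bar u$. I would simultaneously use the ergodic relations $H(x,D\bar u)=F(x,\bar m)-\bar\lambda$ and the distributional identity $\int \lg D(u^T-\bar u),D_pH(x,D\bar u)\rg\,d\bar m=0$ coming from \eqref{eq:ergpb}. Grouping the $m^T$- and $\bar m$-integrals and performing exactly the Lasry--Lions rearrangement used in the proof of Theorem \ref{th:ergopb}, the ergodic constant $\bar\lambda$ cancels (because $m^T$ and $\bar m$ are both probability measures, so both integrate $\bar\lambda$ to the same value) and two convexity brackets appear,
\[
H(x,D\bar u)-H(x,Du^T)-\lg D\bar u-Du^T,D_pH(x,Du^T)\rg \ge 0,
\]
\[
H(x,Du^T)-H(x,D\bar u)-\lg Du^T-D\bar u,D_pH(x,D\bar u)\rg \ge 0,
\]
both nonnegative by the uniform convexity in \eqref{hyp:unifCv}. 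What remains is exactly $-\int(F(x,m^T)-F(x,\bar m))\,d(m^T-\bar m)$ minus these two nonnegative terms (weighted by $m^T$ and $\bar m$ respectively), which gives the claimed differential inequality.

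It then remains to bound $G(0)-G(T)$. The key observation is that $m^T(t)-\bar m$ annihilates constants, so for any $c\in\R$ one may replace $u^T(t)-\bar u$ by $u^T(t)-\bar u-c$ without changing $G$; choosing $c$ to center $u^T(t)-\bar u$ gives $|G(t)|\le {\rm osc}_{\T^d}(u^T(t)-\bar u)$. This oscillation is controlled by the Lipschitz constant of $u^T$, which is uniform in $T$ by Lemma \ref{lem:Esti}, together with the fixed Lipschitz constant of $\bar u$, so $|G(t)|\le C$ uniformly and $G(0)-G(T)\le 2C$. Note that this step uses neither the normalization nor \eqref{hyp:Fstrict}: the coercivity assumption is only what later converts this bound into a bound on $\int_0^T\|F(\cdot,m^T)-F(\cdot,\bar m)\|_\infty$.

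The main obstacle is rigor rather than algebra: $u^T$ and $\bar u$ are only Lipschitz (semiconcave) viscosity solutions, and $\bar m$ need not be absolutely continuous, so the product $\bar m D_pH(x,D\bar u)$ and the integrations by parts against it are a priori ill-defined. I would legitimize the spatial manipulations exactly as in the uniqueness argument above: mollify $\bar m$ by $\xi^\ep$, work with the regularized velocity $V^\ep=\xi^\ep\star(\bar m D_pH(\cdot,D\bar u))/(\xi^\ep\star\bar m)$, and send $\ep\to0$, controlling the commutator remainder through the weak-KAM fact (Remark \ref{rem:1}) that $D\bar u$ is continuous at $\bar m$-a.e.\ point, together with Lebesgue's dominated convergence theorem. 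The time differentiation of $t\mapsto\int(u^T(t)-\bar u)\,dm^T(t)$ is justified by a standard regularization in time, using that $m^T\in L^\infty$ solves the continuity equation in the distributional sense and that $u^T$ is Lipschitz continuous in space and time.
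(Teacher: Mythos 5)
Your proof is correct and is essentially the paper's own argument: the same Lasry--Lions duality pairing of $(u^T-\bar u)$ with $(m^T-\bar m)$, the same mollification $m^\ep=\xi^\ep\star\bar m$, $V^\ep$ to make sense of $\bar m D_pH(\cdot,D\bar u)$, the same convexity rearrangement cancelling $\bar\lambda$ via the probability normalization, and the same bound on the boundary terms through the uniform Lipschitz estimate of Lemma \ref{lem:Esti}. The only difference is organizational---you state a differential inequality for $G(t)$ and integrate it, whereas the paper integrates the duality identity over $(0,T)$ directly---which changes nothing of substance.
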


\begin{proof} We use the same kind of argument as for the uniqueness part of Theorem \ref{th:ergopb}. Let   $\ep>0$, $\xi$ and $\xi^\ep(x)=\frac{1}{\ep^d} \xi(x/\ep)$ be as before. We set $m^\ep= \xi_\ep\star m$ and $\ds V^\ep= \frac{\xi^\ep \star (\bar mD_pH(\cdot,D\bar u))}{m^\ep}$, so that $\ds -{\rm div} \left( m^\ep V^\ep\right)= 0$ in $\T^d$. We multiply this equality  by $(u^T(t)-\bar u)$ and integrate on $(0,T)\times \T^d$: 
\be\label{UnifEst1}
\begin{array}{rl}
0\; = & \ds  \int_0^T \int_{\T^d} \lg D(u^T(t)-\bar m), m^\ep V^\ep\rg \\
= & \ds  
\int_0^T \int_{\T^d} \lg D(u^T(t)-\bar m), m^\ep D_pH(x,D\bar u) \rg +R_\ep
\end{array}\ee
where, as in the proof of Theorem \ref{th:ergopb}, $R_\ep\to 0$ as $\ep\to 0$.  Since $m^T(t)$ and $m^\ep$ are probability measures, we also have,  in view of \eqref{eq:ergpb}-(i):
\be\label{UnifEst2}
\int_0^T\int_{\T^d} (m^T(t)-m^\ep) (H(x,D\bar u)-F(x,\bar m)) =0\;.
\ee
From \eqref{MFG}-(ii) we have
$$
\ds \partial_t(m^T-m^\ep)-{\rm div} \left( m^T D_pH(x,Du^T)\right)= 0.
$$
Multiplying this equality by the Lipschitz map $u^T-\bar u$ and integrating in time-space gives, thanks to \eqref{MFG}-(i), 
\be\label{UnifEst3}
\begin{array}{l}
\ds 0= \int_{\T^d}\left[ (u^f-\bar u)(m^T(T)-m^\ep)-(u^T(0)-\bar u)(m_0-m^\ep)\right]\\
\qquad \ds + \int_0^T \int_{\T^d} -(H(x,Du^T(t))-F(x,m^T(t)))(m^T(t)-m^\ep)\\
\qquad \ds + \int_0^T \int_{\T^d} \lg D(u^T(t)-\bar u), m^T(t)D_pH(x,u^T(t))\rg
\end{array}
\ee
%Rearranging we get
%\be\label{UnifEst3}
%\begin{array}{l}
%\ds 
%\int_0^T \int_{\T^d}  \left( H(x,u^T(t))-F(x,m^T(t))\right)(m^T(t)-m^\ep) 
%+ \int_{\T^d} \lg D(u^T(t)-\bar u), m^T(t)D_pH(x,Du^T)\rg
%\\
%\qquad \ds 
%= \int_{\T^d} (u^f-\bar u)(m^T(T)-m^\ep) - (u^T(0)-\bar u)(m_0-m^\ep)
%\end{array}
%\ee
Note that the first integral is bounded uniformly with respect to $T$ because 
$$
\left|\int_{\T^d} (u^f-\bar u)(m^T(T)-m^\ep)\right|\leq 2(\|u^f\|_\infty+\|\bar u\|_\infty)
$$
while, since  $m_0$ and $m^\ep$ are probability measures, 
$$
\begin{array}{l}
\ds 
\left|\int_{\T^d} (u^T(0)-\bar u)(m_0-m^\ep)\right|\\
\qquad \leq  \ds  
\left|\int_{\T^d} (u^T(0)-\int_{\T^d}u^T(0))(m_0-m^\ep)\right|+
\left|\int_{\T^d} \bar u(m_0-m^\ep)\right| \\
\qquad \leq  \ds 
2 \|Du^T(0)\|_\infty + 2\|\bar u\|_\infty 
\end{array}
$$
where $Du^T$ is uniformly bounded. Putting together \eqref{UnifEst1}, \eqref{UnifEst2}, \eqref{UnifEst3} and rearranging as in the proof of Theorem \ref{th:ergopb} we obtain
$$
\begin{array}{l}
\ds \int_0^T\int_{\T^d} m^T(t)  \left[ \bar H-H(t)-\lg DH(t), D(\bar u- u^T)\rg \right] \\
\qquad \ds +  \int_0^T\int_{\T^d}  m^\ep  \left[ H(t)-\bar H-\lg D\bar H, D(u^T(t)-\bar u)\rg\right]\qquad  \\
\qquad \ds +  \int_0^T\int_{\T^d} (m^T(t)-m^\ep)(F(x,m^T(t))-F(x,\bar m))  \leq C +R_\ep
\end{array}
$$
where we have set $H(t)= H(x,Du^T(t,x))$, $DH(t)= D_pH(x, Du^T(t,x))$, $\bar H= H(x,D\bar u(x))$ and $D\bar H= D_pH(x,D\bar u(x))$. The first two terms being nonnegative, we get the desired result by letting $\ep\to 0$. 
\end{proof}

\begin{proof}[Proof of Theorem \ref{thm:uT/T}] Recall the notations $v^T(s,x)= u^T(sT,x)$  and $\nu^T(s,x)= m^T(sT,x)$ for $(s,x)\in [0,1]\times \T^d$. 
According to Lemma \ref{lem:Esti}, we have 
$$
\int_0^1 \int_{\T^d} (F(x,\nu^T(s))-F(x,\bar m))d(\nu^T(s)-\bar m) \leq \frac{C}{T}\;.
$$
From assumption \eqref{hyp:Fstrict} this implies that 
$$
\int_0^1 \int_{\T^d} (F(x,\nu^T(s))-F(x,\bar m))^2\ dxdt \leq \frac{C}{T}\;,
$$
and, using the uniform regularity of the map $F$ and Hölder inequality, 
\be\label{conv:F}
\begin{array}{rl}
\ds \int_0^1 \|F(\cdot,\nu^T(s))-F(\cdot,\bar m)\|_\infty\ ds \; \leq & \ds C \int_0^1 \|F(\cdot,\nu^T(s))-F(\cdot,\bar m)\|_2\ ds\\
 \leq & \ds \frac{C}{T^{\frac12}}\;.
\end{array}
\ee
Note that the map $v^T$ solves 
\be\label{eq:vT}
-\frac{\partial_s v^T}{T} +H(x,Dv^T) =F(x,\nu^T), \qquad v^T(1,x)=u^f(x)\;, 
\ee
while the map $w^T(s,x)= \bar u(x)+T\bar \lambda(1-s)$ solves 
$$
-\frac{\partial_s w^T}{T} +H(x,Dw^T) =F(x,\bar m), \qquad v^T(1,x)=\bar u(x)\;. 
$$
From standard estimates in viscosity solutions we deduce that, for any $t\in [0,1]$, 
$$
\begin{array}{rl}
\ds \left\| v^T(t,\cdot)-w^T(t,\cdot)\right\|_\infty \; \leq & \ds \left\| v^T(1,\cdot)-w^T(1,\cdot)\right\|_\infty \\
& \qquad \ds +T \int_t^1 \|F(\cdot,\nu^T(s))-F(\cdot,\bar m)\|_\infty\ ds\;.
\end{array}
$$
So
$$
\begin{array}{rl}
\ds \left\| \frac{v^T(t,\cdot)}{T}-\bar \lambda (1-t) \right\|_\infty \; \leq & \ds \frac{\|u^f\|_\infty+2\|\bar u\|_\infty}{T} 
%\\ & \qquad \ds 
+   \int_t^T \|F(\cdot,\nu^T(s))-F(\cdot,\bar m)\|_\infty ds\\
\leq & \ds  \frac{C}{T^{\frac12}}\;.
\end{array}
$$
\end{proof}

%\begin{Theorem} Next we assume that assumption \eqref{hyp:Fstrict} and \eqref{hyp:unifCv} hold. Then $Dv^T$ converges to $D\bar u$ in $L^2((0,1\times \T^d)$. 
%\end{Theorem}
%
%\begin{proof} Let us first check that PB PROUVER QUE $Dv^T$ tend faiblt vers $D\bar u$. ?????????????
%
%
%We integrate equation \eqref{eq:vT} satisfied by $v^T$ to obtain
%$$
%-\int_{\T^d} \left(\frac{u^f}{T}-\frac{u^T(0)}{T}\right)\ dx  +\int_0^1\int_{\T^d} H(x,Dv^T)\ dxdt =\int_0^1\int_{\T^d} F(x,D\nu^T)\ dxdt\;. 
%$$
%From Theorem \ref{thm:uT/T}, the term $\ds \int_{\T^d} \left(\frac{u^f}{T}-\frac{u^T(0)}{T}\right)\ dx$ converges to $-\bar \lambda$, while the term $\ds \int_0^1\int_{\T^d} F(x,D\nu^T)\ dxdt$ tends to $\ds \int_{T^d}F(x,\bar m)\ dx$ thanks to \eqref{conv:F}. So 
%$$
%\lim_{T\to+\infty} \int_0^1\int_{\T^d} H(x,Dv^T)\ dxdt = -\bar \lambda+ \int_{T^d}F(x,\bar m)\ dx= \int_{T^d}H(x,D\bar u)\ dx\;.
%$$
%
%
%\end{proof}

%%%%%%%%%%%%%%%%%%%%%%%%%%%%%%%%%%%%%%
\section{Appendix: proof of the existence and uniqueness result}

The following result is stated in \cite{LL07mf}. For convenience of the reader we recall the main ingredients of proof. 

\begin{Theorem}[\cite{LL07mf}]\label{thm:MFG} Let $H$ and $F$ satisfy conditions  \eqref{regucondF} and \eqref{hyp:unifCv}. Then equation \eqref{MFG} has a solution. If moreover the following inequality holds:
$$
\int_{\T^d} (F(x,m_1)-F(x,m_2))d(m_1-m_2) \geq 0\qquad \forall m_1,m_2\in P(\T^d),
$$
then the solution of \eqref{MFG} is unique. 
\end{Theorem}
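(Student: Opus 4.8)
The plan is to prove existence by a Schauder fixed-point argument on the flow of measures, and uniqueness by the Lasry--Lions monotonicity computation, i.e. the same algebra already carried out in the proof of Theorem \ref{th:ergopb}.

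For existence I would set the fixed-point map on $\mathcal C=C([0,T];(P(\T^d),\dk))$. Given $\mu\in\mathcal C$, the source $(t,x)\mapsto F(x,\mu(t))$ is continuous in $t$, of class $\mathcal C^2$ in $x$, and bounded in $\mathcal C^2$ by \eqref{boundC2} \emph{independently of} $\mu$; I would then solve backward $-\partial_t u+H(x,Du)=F(x,\mu(t))$ with $u(T)=u^f$. By \eqref{hyp:unifCv} and the optimal-control representation $u(t,x)=\inf\{\int_t^T [L(\gamma,\dot\gamma)+F(\gamma,\mu(s))]\,ds+u^f(\gamma(T))\}$ over curves with $\gamma(t)=x$, one gets a unique Lipschitz viscosity solution $u^\mu$ whose gradient bound and semiconcavity modulus are uniform in $\mu$ (since the data are uniformly $\mathcal C^2$, no feedback loop of the kind met in Lemma \ref{lem:Esti} occurs). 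Then I would solve forward $\partial_t m-\dive(mD_pH(x,Du^\mu))=0$ with $m(0)=m_0$, realizing the solution $m^\mu$ as the push-forward of $m_0$ along the optimal flow of $u^\mu$, which is well defined for $m_0$-a.e. starting point because optimal trajectories are $m_0$-a.e. unique when the value function is semiconcave.

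Setting $\Phi(\mu)=m^\mu$, Lemma \ref{lem:LipschConti} gives that $m^\mu$ is $\dk$-Lipschitz in time with constant $\|D_pH(\cdot,Du^\mu)\|_\infty\le C$ uniform in $\mu$; since $P(\T^d)$ is compact, Arzel\`a--Ascoli makes $\Phi(\mathcal C)$ relatively compact. Restricting $\Phi$ to the closed convex hull of its image, it remains to check continuity: if $\mu_n\to\mu$ uniformly then $F(\cdot,\mu_n(\cdot))\to F(\cdot,\mu(\cdot))$ by \eqref{regucondF}, so $u^{\mu_n}\to u^\mu$ uniformly by stability of viscosity solutions, and the uniform semiconcavity forces $Du^{\mu_n}\to Du^\mu$ at a.e. point, hence the flows and the measures $m^{\mu_n}$ converge. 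Schauder's theorem then yields a fixed point $m=\Phi(m)$, and $(u^m,m)$ solves \eqref{MFG}.

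For uniqueness, let $(u_1,m_1)$ and $(u_2,m_2)$ be two solutions. Differentiating $t\mapsto\int_{\T^d}(u_1-u_2)(m_1-m_2)\,dx$, using the two copies of each equation and integrating over $[0,T]$, the boundary contributions vanish since $u_1(T)=u_2(T)=u^f$ and $m_1(0)=m_2(0)=m_0$, and after the same rearrangement as in Theorem \ref{th:ergopb} one reaches
$$
\int_0^T\!\!\int_{\T^d}(F(x,m_1)-F(x,m_2))\,d(m_1-m_2)\,dt=-\int_0^T\!\!\int_{\T^d}\bigl(m_1G_1+m_2G_2\bigr)\,dx\,dt,
$$
where, with $H_i=H(x,Du_i)$ and $DH_i=D_pH(x,Du_i)$, the terms $G_1=H_2-H_1-\lg DH_1,D(u_2-u_1)\rg$ and $G_2=H_1-H_2-\lg DH_2,D(u_1-u_2)\rg$ are nonnegative by convexity of $H$. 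The monotonicity assumption forces the left-hand side to be nonnegative, so both members vanish; the uniform convexity in \eqref{hyp:unifCv} gives $G_i\ge\frac{1}{2\bar C}|Du_1-Du_2|^2$, whence $Du_1=Du_2$ both $m_1$- and $m_2$-a.e. Consequently $w=m_1-m_2$ solves $\partial_t w-\dive(wD_pH(\cdot,Du_1))=0$ with $w(0)=0$, and since $u_1$ is semiconcave the drift is one-sided Lipschitz, so $w\equiv 0$ and $m_1=m_2$. Then $F(\cdot,m_1)=F(\cdot,m_2)$, so $u_1,u_2$ solve the same Hamilton--Jacobi equation with the same terminal datum and coincide by comparison.

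The technical heart is the low regularity of $Du^\mu$, which is defined only almost everywhere: both the forward continuity equation in the existence step and, in the uniqueness step, the replacement of $D_pH(\cdot,Du_2)$ by $D_pH(\cdot,Du_1)$ on the support of $m_2$ must be justified through the semiconcavity of the value function. As in Theorem \ref{th:ergopb}, the mollification $m^\ep=\xi^\ep\star m$ together with the vanishing error $R_\ep\to0$ is what turns the formal Lasry--Lions integration by parts into a rigorous one, and the well-posedness of the transport equation with a merely semiconcave-generated drift is the single point that resists purely soft arguments.
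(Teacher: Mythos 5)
Your existence argument takes a genuinely different route from the paper's. The paper proves existence by vanishing viscosity: it solves the second-order system \eqref{MFGsto}, derives estimates uniform in $\ep$ (Lipschitz bound and semiconcavity for $u^\ep$, an $L^\infty$ bound for $m^\ep$ via the maximum principle, H\"older continuity of $t\mapsto m^\ep(t)$ for $\dk$), and passes to the limit using stability of viscosity solutions together with a.e.\ convergence of $Du^\ep$. Your Schauder fixed point on $C([0,T];P(\T^d))$, with the measure transported along optimal characteristics of the frozen Hamilton--Jacobi equation, is the alternative construction from Lions' lectures and is viable. For uniqueness the paper gives no proof at all (it cites \cite{LL07mf} and omits it), so your Lasry--Lions monotonicity computation is an addition rather than a deviation; its algebra is the same as in the proof of Theorem \ref{th:ergopb} and is fine up to the last step.

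There are, however, two genuine gaps. First, the paper's notion of solution requires $m^T\in L^\infty((0,T)\times\T^d)$, not merely a flow of probability measures: your push-forward $m^\mu$ must be shown to have a bounded density. This is true, but it rests on the quantitative injectivity (no-crossing, Lipschitz inverse) of the optimal flow coming from semiconcavity, which you never invoke; by contrast, the vanishing-viscosity route gets $\|m^\ep\|_\infty\le\|m_0\|_\infty e^{CT}$ directly from the maximum principle. Without absolute continuity of $m$, your fixed point produces an object that is not a solution in the paper's sense, and this same property is also what your uniqueness step secretly needs.

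Second, the step ``since $u_1$ is semiconcave the drift is one-sided Lipschitz, so $w\equiv 0$'' has the sign of the one-sided Lipschitz condition backwards. Semiconcavity of $u_1$ gives $\lg Du_1(x)-Du_1(y),x-y\rg\le C|x-y|^2$, hence the drift $b=-D_pH(\cdot,Du_1)$ appearing in the continuity equation satisfies the \emph{expansive} bound $\lg b(x)-b(y),x-y\rg\ge -C|x-y|^2$. This yields forward injectivity (equivalently, backward uniqueness) of characteristics, not forward uniqueness; and for an expansive drift the forward continuity equation does admit several measure solutions in general (an atom may split, as for $b(x)={\rm sign}(x)$ with initial datum $\delta_0$). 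Uniqueness of $w$ is rescued only by combining (i) absolute continuity of $m_1,m_2$ (the $L^\infty$ point above), (ii) forward uniqueness of characteristics from Lebesgue-a.e.\ starting point, which here comes from identifying solutions of the ODE with optimal trajectories and using a.e.\ differentiability of $u_1(0,\cdot)$, and (iii) a superposition argument reducing measure solutions of the continuity equation to characteristics. You correctly flag this transport problem as ``the single point that resists purely soft arguments'', but the one-sided Lipschitz mechanism you propose in its place is not the correct one.
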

 
\begin{proof}
The proof is based on a vanishing viscosity argument. Let $\ep>0$ and $(u^\ep,m^\ep)$ be the solution to 
\be\label{MFGsto}
\left\{\begin{array}{cl}
(i)& -\partial_t u^\ep -\ep\Delta u^\ep +H(x,Du^\ep) =F(x,m^\ep(t))\; {\rm in }\; (0,T)\times \R^d\\
\m
(ii) & \partial_t m^\ep-\ep\Delta m^\ep -{\rm div} (m^\ep D_pH(x,Du^\ep))=0\; {\rm in }\; (0,T)\times \R^d\\
\m
(iii)& m^T(0)=m_0, \; u^\ep(x,T)=u^f(x)\; {\rm in}\; \R^d
\end{array}\right.
\ee
By standard regularity results for parabolic equations and fixed point arguments, it is not difficult to check that system \eqref{MFGsto} has at least one classical solution. Moreover, as $u^f$ is of class ${\mathcal C}^2$ and the Hamiltonian satisfies~\eqref{hyp:unifCv}, the map $u^\ep$ is semiconcave with a semiconcavity argument independent of $\ep$. In particular $u^\ep$ is uniformly Lipschitz continuous. Let us now show that $m^\ep$ is uniformly bounded. For this we note that 
$$
\begin{array}{l}
\ds {\rm div} (m^\ep D_pH(x,Du^\ep))\\
\ds \qquad = \lg Dm^\ep, D_pH(x,Du^\ep)\rg + m^\ep {\rm Tr}\left(D^2_{xp}H(x,Du^\ep)+ D^2_{pp}H(x,Du^\ep)D^2u^\ep\right)\\
\ds \qquad \leq \lg Dm^\ep, D_pH(x,Du^\ep)\rg + Cm^\ep
\end{array}
$$
because $D^2_{xp}H(x,Du^\ep)$ is bounded thanks to the regularity of $H$ and the uniform Lipschitz continuity of $u^\ep$, and 
${\rm Tr}\left(D^2_{pp}H(x,Du^\ep)D^2u^\ep\right)$ is bounded above because $D^2_{pp}H$ is positive and $u^\ep$ is uniformly semiconcave. 
So $m^\ep$ is a subsolution of  the transport equation
$$
\partial_t m^\ep-\ep\Delta m^\ep -\lg Dm^\ep, D_pH(x,Du^\ep)\rg -Cm^\ep=0\; {\rm in }\; (0,T)\times \R^d.
 $$
By maximum principle we get $\|m^\ep\|_\infty \leq \|m_0\|_\infty e^{CT}.$ Next we claim that the map $t\to m^\ep(t)$ is uniformly Hölder continuous: indeed, if we multiply \eqref{MFGsto}-(ii) by $m^\ep$ and integrate, we get: 
 $$
 \ep \int_0^T\int_{\T^d} |Dm^\ep|^2 \leq \int_{\T^d} \left|m^2(T)-m^2(0)\right| + \int_0^T\int_{\T^d} m^\ep|Dm^\ep||D_pH(x,Du^\ep)|. 
 $$ 
As $m^\ep$ and $D_pH(x,m^\ep)$ are uniformly bounded, this implies that:
$\ds  \ep \left(\int_0^T\int_{\T^d} |Dm^\ep|^2\right)^{\frac12}\leq C.$
 Then,  for any smooth test function $\varphi$ and for any $0\leq t_1\leq t_2\leq T$, we have by~\eqref{MFGsto}-(ii) that 
$$
\int_{\T^d} \varphi(m^\ep(t_2)-m^\ep(t_1)) = - \int_{t_1}^{t_2}\int_{\T^d} \ep\lg Dm^\ep, D\phi\rg + m^\ep\lg D_pH(x,Du)), D\phi\rg \leq
C(t_2-t_1)^{\frac12}\|D\varphi\|_\infty
$$
because $Du^\ep$ and $m^\ep$ are uniformly bounded. Taking the supremum over all  $1-$Lipschitz map  $\varphi$ gives ${\bf d}_1(m(t_1),m(t_2))\leq C(t_2-t_1)$. 

 Because of the bounds on $(u^\ep,m^\ep)$, we can assume that (up subsequences) $u^\ep$ converges uniformly to some Lipschitz continuous map $u$. On another hand $m^\ep$ converges  in $L^\infty-$weak*  and in ${\mathcal C}^0([0,T],P(\T^d))$ to some $m\in L^\infty\cap {\mathcal C}^0([0,T],P(\T^d))$. In particular, $m(0)=m_0$. Using the continuity assumption~\eqref{regucondF}, we also have that $F(\cdot, m^\ep(\cdot))$ converges uniformly to $F(\cdot, m(\cdot))$. By standard viscosity solutions arguments, we can conclude to the convergence of $u^\ep$ to the unique solution of 
 $$
 \left\{\begin{array}{l}
-\partial_t u+H(x,Du) =F(x,m(t))\; {\rm in }\; (0,T)\times \R^d\\
u(x,T)=u^f(x)\; {\rm in}\; \R^d
\end{array}\right.
$$
Next we turn to the limit of $m^\ep$:  for a fixed test function $\varphi\in{\mathcal C}^\infty_c((0,T)\times \T^d)$, we have 
$$
\int_0^T\int_{\T^d}\left(-\partial_t \varphi-\ep \Delta \varphi +\lg D_pH(x,Du_n),D\varphi(t,x)\rg \right) m_n(t,x)=0
$$
where $D_pH(x,Du_n)$ is bounded and converges a.e. to $D_pH(x,Du)$ while $m^\ep$ converges weakly* to $m$. So we get as $\ep\to+\infty$, 
$$
\int_0^T\int_{\T^d}\left(-\partial_t \varphi(t,x)+\lg D_pH(x,Du),D\varphi(t,x)\rg \right) m(t,x)=0,
$$
which shows that $m$ is a solution of the continuity equation \eqref{MFG}-(ii). In conclusion, the pair $(u,m)$ solves \eqref{MFG}. The uniqueness for this system is established in full details in \cite{LL07mf}, so we omit the proof. 
\end{proof}

We complete the paper by a standard estimate on the continuity equation: 
\be\label{ContEq}
\partial_t m +{\rm div}(m b)=0 \; {\rm in }\; (0,T)\times \T^d
\ee

\begin{Lemma}\label{lem:LipschConti} Assume that $b:(0,T)\times \T^d\to\R^d$ is a Borel vector field with $\|b\|_\infty<+\infty$. If $m$ satisfies~\eqref{ContEq}, then $m$ is  Lipschitz continuous as a map from  $[0,T]$ to $P(\T^d)$, with a Lipschitz constant bounded above by $\|b\|_\infty$.
\end{Lemma}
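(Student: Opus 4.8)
The plan is to estimate ${\bf d}_1(m(t_1),m(t_2))$ directly from its dual definition \eqref{MWdistance}: for $0\le t_1\le t_2\le T$ it suffices to bound, uniformly over all $1$-Lipschitz maps $h:\T^d\to\R$, the scalar quantity $\int_{\T^d} h\,d(m(t_2)-m(t_1))$. The mechanism producing this bound will be the weak formulation of the continuity equation \eqref{ContEq}, namely $\int_0^T\int_{\T^d}(\partial_t\varphi+\langle b,D\varphi\rangle)\,m=0$ for all smooth test functions $\varphi$ compactly supported in time.

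First I would fix a smooth $1$-Lipschitz map $h$, so that $\|Dh\|_\infty\le 1$, and test \eqref{ContEq} against $\varphi(t,x)=\zeta(t)h(x)$, where $\zeta$ is a smooth time cutoff. Letting $\zeta$ increase to the indicator $\mathbf{1}_{[t_1,t_2]}$ and using the weakly continuous-in-time representative of $m$ (which exists for any distributional solution of \eqref{ContEq} with bounded drift), I would pass to the limit $\zeta'\to\delta_{t_1}-\delta_{t_2}$ to obtain the identity
$$
\int_{\T^d} h\,d(m(t_2)-m(t_1)) = \int_{t_1}^{t_2}\int_{\T^d} \langle Dh,b\rangle\, m(t)\,dx\,dt .
$$
Since each $m(t)$ is a probability measure, $|Dh|\le 1$ everywhere, and $\|b\|_\infty<+\infty$, the integrand on the right is dominated in absolute value by $\|b\|_\infty$, which yields
$$
\left|\int_{\T^d} h\,d(m(t_2)-m(t_1))\right|\le \|b\|_\infty\,(t_2-t_1).
$$

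To handle a general, merely $1$-Lipschitz competitor $h$ in \eqref{MWdistance}, I would approximate it by its mollifications $h\star\xi^\delta$, which are smooth, remain $1$-Lipschitz, and converge to $h$ uniformly; the previous bound then passes to $h$ as $\delta\to 0$. Taking the supremum over all $1$-Lipschitz $h$ in \eqref{MWdistance} gives ${\bf d}_1(m(t_1),m(t_2))\le \|b\|_\infty(t_2-t_1)$, which is exactly the claimed Lipschitz continuity with constant $\|b\|_\infty$.

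The only genuinely delicate point is the passage to the limit $\zeta\to\mathbf{1}_{[t_1,t_2]}$: it requires selecting the weakly continuous representative of $m$ so that the two boundary contributions converge to $\int_{\T^d} h\,m(t_i)$, after which the identity is a direct consequence of the weak form of \eqref{ContEq}. Once this is in place, the pointwise domination and the mollification argument are entirely routine.
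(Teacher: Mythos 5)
Your proof is correct and follows essentially the same route as the paper: both test the weak form of \eqref{ContEq} against a product of a time cutoff and a $1$-Lipschitz spatial function, pass to the limit where the cutoff becomes $\mathbf{1}_{[t_1,t_2]}$ (using the weakly continuous-in-time representative of $m$), bound the drift term by $\|b\|_\infty$, and take the supremum in the dual definition \eqref{MWdistance}. Your explicit mollification step for non-smooth competitors $h$ is a minor refinement that the paper leaves implicit, not a different argument.
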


\begin{proof}  Fix $0<  t_1< t_2< T$ and let $h\in {\mathcal C}^\infty_c(\T^d)$ be $1-$Lipschitz continuous. Let  $\ep>0$ small and
$$
\varphi_\ep(t,x)=\left\{ \begin{array}{ll}
 (t-t_1)h(x)/\ep & {\rm if }\;  t\in [t_1,t_1+\ep]\\
 h(x) & {\rm if } \;  t\in [t_1+\ep,t_2-\ep]\\
(t_2-t)h(x)/\ep & {\rm if } \;  t\in [t_2-\ep,t_2]\\
0 & {\rm otherwise}
\end{array}\right.
$$
As 
$$
 \int_0^T \int_{\R^d} \left(-\partial_t\varphi_\ep +\lg m, D\varphi_\ep\rg\right)m  = 0,
 $$
we have 
$$
  \int_{t_1}^{t_1+\ep}  \int_{\R^d} \frac{hm }{\ep}  +\int_{t_1+\ep}^{t_2-\ep} \int_{\R^d} \lg b, Dh\rg m  +   \int_{t_2-\ep}^{t_2}  \int_{\R^d} -\frac{hm}{\ep}  = o(1).
$$
Letting $\ep\to 0$ gives for a.e. $0<t_1<t_2<T$: 
$$
   \int_{\R^d} h\ d(m(t_1)-m(t_2))   +\int_{t_1}^{t_2} \int_{\R^d} \lg b, Dh\rg m=0\;.
$$
So
$$
   \int_{\R^d} h\ d(m(t_1)-m(t_2))  \leq \|b\|_\infty\| Dh\|_\infty |t_2-t_1|\;.
$$
 Taking the sup over $h$ gives then:
 $$
 {\bf d}_1(m(t_1),m(t_2)) \leq   \|b\|_\infty |t_2-t_1|\;.
 $$
\end{proof}

\end{document}